 \newtheorem{thm}{Theorem}[section]
\newtheorem{cor}[thm]{Corollary}
\newtheorem{prop}[thm]{Proposition}
\newtheorem{lem}[thm]{Lemma}
\theoremstyle{definition}
\begin{document}
 \title[Parity of an odd dominating set]
 {Parity of an odd dominating set}
 \author{Ahmet Batal}
 \address{Department of Mathematics\\ Izmir Institute of Technology\\Izmir, TURKEY}
\email{ahmetbatal@iyte.edu.tr}
 \date{}
\begin{abstract}
For a simple graph $G$ with vertex set $V(G)=\{v_1,...,v_n\}$, we define the closed neighborhood set of a vertex $u$ as $N[u]=\{v \in V(G) \; | \; v \; \text{is adjacent to} \; u \; \text{or} \; v=u \}$ and the closed neighborhood matrix $N(G)$ as the matrix obtained by setting to $1$ all the diagonal entries of the adjacency matrix of
$G$. We say a set $S$ is odd dominating if $N[u]\cap S$ is odd for all $u\in V(G)$. We prove that the parity of an odd dominating set of $G$ is equal to the parity of the rank of $G$, where the rank of $G$ is defined as the dimension of the column space of $N(G)$. Using this result we prove several corollaries in one of which we obtain a general formula for the nullity of the join of graphs.
\end{abstract}

\subjclass[2020]{05C69}
\keywords{Lights Out, all-ones problem, odd dominating set, parity domination, domination number.}
 \maketitle

 \section{Introduction}

 Let $N[u]$ denote the \emph{closed neighborhood set} of a vertex $u$ in a simple graph $G$, i.e.; $$N[u]=\{v \in V(G) \; | v \; \text{is} \; \text{adjacent} \; \text{to} \; u \; \text{or} \; v=u \}.$$ Then, we say a subset $S$ of vertices is \emph{odd (even) dominating} if $N[u]\cap S$ is odd (even) for all $u\in V(G)$. In general, for an arbitrary subset $C$ of vertices, we say a set $S$ is a $C$-\emph{parity} set if $N[u]\cap S$ is odd for all $u\in C$ and even otherwise \cite{Amin96}. If there is a $C$-parity set for a given set $C$, we say that $C$ is \emph{solvable}. If there exists a $C$-parity set for every set $C$ of vertices in a graph $G$, then we say $G$ is \emph{always solvable}.


 Let $n$ be the order of $G$, $V(G)=\{v_1,...,v_n\}$ and $W$ be a subset of $V(G)$. The column vector $\mathbf{x}_W=(x_1,...,x_n)^t$, which is defined as $x_i=1$ if $v_i \in W $ and $x_i=0$ otherwise, is called the \emph{characteristic vector} of $W$. The closed neighbourhood matrix $N=N(G)$ of a graph
$G$ is obtained by setting to $1$ all the diagonal entries of the adjacency matrix of
$G$. Equivalently, $N(G)$ is the matrix whose $i$th column is equal to $\mathbf{x}_{N[v_i]}$. It is easy to observe that $S$ is a $C$-parity set if and only if
 \begin{equation}
 \label{NGsc}
 N(G)\mathbf{x}_S=\mathbf{x}_C
 \end{equation}
 over the field $\mathbb{Z}_2$ \cite{Sutner89},\cite{Sutner90}.

Let us denote the vectors whose components are all $0$ and all $1$ by $\mathbf{0}$ and $\mathbf{1}$, respectively. Then the following are equivalent. (\emph{a1}) $S$ is an odd dominating set, (\emph{a2}) $S$ is a $V(G)$-parity set, (\emph{a3}) $N(G)\mathbf{x}_S=\mathbf{1}$. Similarly, (\emph{b1}) $S$ is an even dominating set, (\emph{b2}) $S$ is a $\emptyset$-parity set, (\emph{b3}) $N(G)\mathbf{x}_S=\mathbf{0}$, are equivalent statements. Note that every graph has an even dominating set, which is $\emptyset$. On the other hand, it is proved by Sutner that every graph has an odd dominating set as well \cite{Sutner89} (see also \cite{Caro96}, \cite{Cowen99}, \cite{Erikson04}).


Let $Ker(N)$ and $Col(N)$ denote the  kernel and column space of $N$, respectively. Let $\nu(G):=dim(Ker(N(G))$ and $\rho(G):=dim(Col(N(G))$. We call $\nu(G)$, the \emph{nullity} of $G$ (Amin et al. \cite{Amin98} call it the parity dimension of $G$) and $\rho(G)$, the \emph{rank} of $G$. We have $\nu(G)+\rho(G)=n$ by the rank nullity theorem.

From the matrix equation \eqref{NGsc}, we see that $G$ is always solvable if and only if $\nu(G)=0$. Moreover, $\nu(G)>0$ if and only if $G$ has a nonempty even dominating set.

We write $pr(a)$ to denote the parity function of a number $a$, i.e.; $pr(a)=0$ if $a$ is even and $pr(a)=1$ if $a$ is odd. In the case where $A$ is a matrix, $pr(A)$ is the parity function of the sum of its entries. For a set $S$, we write $pr(S)$ to denote the parity function of the cardinality of $S$ and say the \emph{parity of} $S$ instead of the parity of the cardinality of $S$. Note that $pr(S)= pr( \mathbf{x}_S )$. It was first noticed by Amin et al. [\cite{Amin92}, Lemma 3], and follows immediately from Sutner’s theorem, that for a given graph, the parity of all odd dominating sets are the same. Hence, the value of $pr(S)$, where $S$ is an odd dominating set of a graph is independent of the particular odd dominating set $S$ taken into account.

Our main result Theorem \ref{main} states that the parity of an odd dominating set is equal to the parity of the rank of the graph.
 \section{Main result}

\begin{lem}
\label{prA}
Let $A$ be a $n\times n$, symmetric, invertible matrix over the field $\mathbb{Z}_2$ with diagonal entries equal to $1$. Then
$ pr(A^{-1}) = pr(A) = pr(n).$

\end{lem}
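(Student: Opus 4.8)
The plan is to prove the two equalities separately, reducing everything to a statement about traces. First I would record the elementary fact that for \emph{any} symmetric matrix $M$ over $\mathbb{Z}_2$ the sum of all entries is congruent modulo $2$ to the sum of its diagonal entries, since the off-diagonal entries cancel in pairs $M_{ij}=M_{ji}$. In other words $pr(M)$ equals the parity of $\mathrm{tr}(M)$. Applying this to $A$, whose diagonal consists of $1$'s, gives $pr(A)=pr(\mathrm{tr}(A))=pr(n)$ at once. Since $A$ is symmetric and invertible, $A^{-1}=(A^{t})^{-1}=(A^{-1})^{t}$ is symmetric as well, so the same observation yields $pr(A^{-1})=pr(\mathrm{tr}(A^{-1}))$. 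Hence the whole lemma comes down to the single congruence $\mathrm{tr}(A^{-1})\equiv n \pmod 2$.

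To get at $\mathrm{tr}(A^{-1})$ I would pass to the characteristic polynomial $p_A(x)=\det(xI+A)$ over $\mathbb{Z}_2$. Expanding the determinant shows that the coefficient of $x^{1}$ in $p_A$ is the sum of the $(n-1)\times(n-1)$ principal minors of $A$, which is exactly $\mathrm{tr}(\mathrm{adj}\,A)=\mathrm{tr}(A^{-1})$ because $\det A=1$. Now I would use the hypothesis on the diagonal in an essential way by writing $A=I+B$, where $B$ is symmetric with zero diagonal, i.e.\ $B$ is the adjacency matrix of a graph. Then $p_A(x)=\det((x+1)I+B)=p_B(x+1)$, so the coefficient of $x$ in $p_A$ can be read off from the coefficients of $p_B$.

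The key structural input is that $B$, being symmetric with zero diagonal, is an \emph{alternating} matrix over $\mathbb{Z}_2$, and every principal submatrix of $B$ is again alternating; since an alternating matrix of odd size is singular (alternating forms have even rank), all principal minors of $B$ of odd size vanish. Writing $p_B(y)=\sum_k a_k y^k$, this forces $a_k=0$ whenever $k\not\equiv n\pmod 2$. A short computation then gives that the coefficient of $x$ in $p_B(x+1)$ equals $\sum_{k}\binom{k}{1}a_k=\sum_{k\ \mathrm{odd}}a_k$. If $n$ is even every nonzero $a_k$ has $k$ even, so this sum is $0\equiv n$; if $n$ is odd every nonzero $a_k$ has $k$ odd, so the sum is $\sum_k a_k=p_B(1)=\det(I+B)=\det A=1\equiv n$. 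In both cases $\mathrm{tr}(A^{-1})\equiv n\pmod 2$, which finishes the proof. The main obstacle, I expect, is precisely locating where the all-$1$ diagonal hypothesis must enter: the bare congruence $\mathrm{tr}(A^{-1})\equiv \mathrm{tr}(A)$ is \emph{false} for general symmetric invertible matrices over $\mathbb{Z}_2$, so the argument cannot be purely formal and must exploit that $A-I$ is alternating together with the normalization $\det A=1$.
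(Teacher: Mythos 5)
Your proof is correct, but it follows a genuinely different route from the paper's. You first reduce everything to traces via the observation that $pr(M)=pr(\mathrm{tr}(M))$ for any symmetric $M$ (off-diagonal entries cancel in pairs), which immediately gives $pr(A)=pr(n)$ and leaves only the congruence $\mathrm{tr}(A^{-1})\equiv n \pmod 2$; you then establish this by reading $\mathrm{tr}(A^{-1})=\mathrm{tr}(\mathrm{adj}\,A)$ off as the coefficient of $x$ in $\det(xI+A)=p_B(x+1)$ with $B=A-I$ alternating, and invoking the fact that odd-order alternating matrices are singular, so that $p_B$ has nonzero coefficients only in degrees congruent to $n$ modulo $2$. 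All of these steps check out, including the two-case computation $\sum_{k\ \mathrm{odd}}a_k=0$ or $=p_B(1)=\det A=1$. The paper instead argues entirely at the level of entry sums: it evaluates $\mathrm{Tr}(AA^{-1})=\sum_{i,j}A_{ij}(A^{-1})_{ij}=pr(n)$ (using symmetry of $A^{-1}$), and via the auxiliary matrices obtained by flipping the positions where $A_{ij}$ and $(A^{-1})_{ij}$ vanish simultaneously, together with the identity $B_{ij}\tilde{B}_{ij}=B_{ij}+\tilde{B}_{ij}+1$, deduces $\sum_{i,j}A_{ij}=\sum_{i,j}(A^{-1})_{ij}$ directly; it is a purely combinatorial double count requiring no determinant theory. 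Your argument needs more machinery (characteristic polynomials, adjugates, the even-rank theorem for alternating forms) but is more structural: it isolates exactly where the unit-diagonal hypothesis enters, namely in making $A-I$ alternating, and your closing remark that $\mathrm{tr}(A^{-1})\equiv\mathrm{tr}(A)$ fails for general symmetric invertible matrices over $\mathbb{Z}_2$ is accurate and a worthwhile sanity check that neither proof could be purely formal.
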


\begin{proof}
In the proof, all algebraic operations are considered over the field $\mathbb{Z}_2$. First of all, note that since $A$ is a symmetric matrix with nonzero diagonal entries, we have
\begin{equation}
 pr(A)=\sum_{i,j}A_{ij}= \sum_{i}A_{ii}= \sum_{i}1=pr(n). \nonumber
 \end{equation}
Similarly,
\begin{equation}
 pr(A^{-1})= \sum_{i}(A^{-1})_{ii}. \nonumber
 \end{equation}
On the other hand,

\begin{align}
  pr(n)=Tr(I) & = Tr(AA^{-1}) \nonumber \\
              & = \sum_{i,j} A_{ij}(A^{-1})_{ij} \nonumber \\ \nonumber
              & = \sum_{i} A_{ii}(A^{-1})_{ii}\\ \nonumber
              & = \sum_{i} (A^{-1})_{ii}.\\ \nonumber
\end{align}
\end{proof}

We call a vertex a \emph{null vertex} of a graph $G$ if it belongs to an even dominating set of $G$. Since the set
of all characteristic vectors for even dominating sets of $G$ is a subspace of the vector space of all
binary $n$-tuples, if $v$ is a null vertex of G, then precisely half of the even dominating sets of $G$
contain $v$.

\begin{lem}
\label{j1} Let $G$ be a graph and  $v$ be a null vertex of $G$. Then there exists an odd dominating set of $G$ which does not contain $v$.
\end{lem}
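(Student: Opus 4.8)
The plan is to leverage the $\mathbb{Z}_2$-linear structure already set up in the excerpt, together with the fact (stated above, due to Sutner) that every graph admits an odd dominating set. First I would restate the hypothesis in linear-algebraic terms: saying that $v$ is a null vertex means there is an even dominating set $T$ with $v\in T$; equivalently, there is a vector $\mathbf{x}_T$ in the kernel of $N=N(G)$, so that $N\mathbf{x}_T=\mathbf{0}$, whose coordinate indexed by $v$ equals $1$.

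Next I would invoke the existence of an odd dominating set: there is a set $S$ with $N\mathbf{x}_S=\mathbf{1}$. The key observation is that the odd dominating sets form a coset of the kernel of $N$. Indeed, letting $S'$ be the symmetric difference $S\triangle T$, whose characteristic vector is $\mathbf{x}_{S'}=\mathbf{x}_S+\mathbf{x}_T$ over $\mathbb{Z}_2$, we get $N\mathbf{x}_{S'}=N\mathbf{x}_S+N\mathbf{x}_T=\mathbf{1}+\mathbf{0}=\mathbf{1}$, so $S'=S\triangle T$ is again odd dominating.

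Finally I would use the freedom to toggle the coordinate at $v$. Since $v\in T$, the $v$-coordinate of $\mathbf{x}_S+\mathbf{x}_T$ equals $(\mathbf{x}_S)_v+1$ over $\mathbb{Z}_2$, so exactly one of $S$ and $S\triangle T$ has $v$-coordinate $0$. Concretely, if $v\notin S$ then $S$ itself is the desired set; if $v\in S$ then $S\triangle T$ omits $v$ while remaining odd dominating. Either way we obtain an odd dominating set not containing $v$, which is the assertion.

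I do not expect a genuine obstacle here: the argument is essentially a one-line coset manipulation once the right facts are assembled. The only points that need care are recording that a null vertex really produces a kernel vector whose $v$-entry is $1$, and checking the coordinate bookkeeping over $\mathbb{Z}_2$, so that adding $\mathbf{x}_T$ flips precisely the entries indexed by $T$ (in particular the $v$-entry) without disturbing the defining equation $N\mathbf{x}=\mathbf{1}$.
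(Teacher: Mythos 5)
Your argument is correct and coincides with the paper's own proof: both take an even dominating set containing $v$, an odd dominating set guaranteed by Sutner's theorem, and form their symmetric difference, using $N\mathbf{x}_{S\triangle T}=N\mathbf{x}_S+N\mathbf{x}_T=\mathbf{1}$ to see the result is still odd dominating while the $v$-coordinate is toggled off. No differences worth noting.
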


\begin{proof}
Let $R$ be an even dominating set containing $v$ and $S_1$ be an odd dominating set of $G$. Assume $S_1$ contains $v$, otherwise we are done. Let $S_2$ be the symmetric difference of $S_1$ and $R$. Clearly $S_2$ is an odd dominating set which does not contain $v$.
\end{proof}

Let $G-v$ denote the graph obtained by removing a vertex $v$ and all its incident edges from a graph $G$. The number $nd(v):=\nu(G-v)-\nu(G)$ is called the \emph{null difference number}. It turns out that $nd(v)$ can be either $-1$, $0$, or $1$. Moreover, Ballard et al. proved the following lemma in [\cite{Ballard19}, Proposition 2.4.].

\begin{lem}[\cite{Ballard19}]
\label{j2}
Let $v$ be a vertex of a graph $G$. Then $v$ is a null vertex if and only if  $nd(v)=-1$.
\end{lem}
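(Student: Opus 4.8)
The plan is to place $v$ last in the vertex ordering and exploit the resulting block structure of the closed neighborhood matrix. First I would record the elementary observation that deleting $v$ from $G$ merely deletes $v$ from every remaining closed neighborhood, so the principal submatrix of $N(G)$ obtained by striking the row and column of $v$ is exactly $N(G-v)$. Writing $M=N(G-v)$ and letting $\mathbf{a}\in\mathbb{Z}_2^{\,n-1}$ be the characteristic vector of the open neighborhood of $v$, symmetry of $N(G)$ together with the fact that every diagonal entry equals $1$ yields
\[ N(G)=\begin{pmatrix} M & \mathbf{a}\\ \mathbf{a}^t & 1\end{pmatrix}. \]

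Next I would translate both sides of the asserted equivalence into linear algebra over $\mathbb{Z}_2$. A vector $\binom{\mathbf{y}}{c}$ lies in $Ker(N(G))$ iff $M\mathbf{y}+c\mathbf{a}=\mathbf{0}$ and $\mathbf{a}^t\mathbf{y}+c=0$. By definition $v$ is a null vertex exactly when some even dominating set contains $v$, i.e.\ exactly when $Ker(N(G))$ contains a vector whose last coordinate $c$ equals $1$; equivalently, the system $M\mathbf{y}=\mathbf{a}$, $\mathbf{a}^t\mathbf{y}=1$ is solvable. For the dimension bookkeeping I would introduce the projection $\pi\colon Ker(N(G))\to\mathbb{Z}_2$ onto the last coordinate, so that $\nu(G)=\dim\ker\pi+\dim\operatorname{im}\pi$ and $v$ is null iff $\pi$ is onto. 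A direct computation identifies $\ker\pi$ with $\{\mathbf{y}\in Ker(M):\mathbf{a}^t\mathbf{y}=0\}$.

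The heart of the argument is then a case analysis driven by the condition (P): $\mathbf{a}\in Col(M)$, which by symmetry of $M$ is the same as $\mathbf{a}\perp Ker(M)$. If (P) fails, then $M\mathbf{y}=\mathbf{a}$ has no solution, so $v$ is not null; moreover some $\mathbf{z}\in Ker(M)$ satisfies $\mathbf{a}^t\mathbf{z}=1$, which cuts the dimension of $\ker\pi$ down to $\nu(G-v)-1$, giving $\nu(G)=\nu(G-v)-1$ and $nd(v)=1$. If (P) holds, then $\mathbf{a}^t\mathbf{z}=0$ for every $\mathbf{z}\in Ker(M)$, so $\dim\ker\pi=\nu(G-v)$ and, crucially, the value $\mathbf{a}^t\mathbf{y}$ is the same for every solution $\mathbf{y}$ of $M\mathbf{y}=\mathbf{a}$. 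Hence $\pi$ is onto (that is, $v$ is null) iff that common value is $1$: in that case $\nu(G)=\nu(G-v)+1$ and $nd(v)=-1$, while otherwise $\nu(G)=\nu(G-v)$ and $nd(v)=0$. Collecting the three cases shows simultaneously that $nd(v)\in\{-1,0,1\}$ and that $nd(v)=-1$ occurs precisely when $v$ is null.

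The step I expect to be the main obstacle is pinning down $\dim\ker\pi$ and the solvability of the constrained system in the (P) case at the same time: the clean outcome rests on the observation that when $\mathbf{a}\perp Ker(M)$ the functional $\mathbf{y}\mapsto\mathbf{a}^t\mathbf{y}$ is constant on the affine solution set $\mathbf{y}_0+Ker(M)$, so the extra equation $\mathbf{a}^t\mathbf{y}=1$ is either met by all solutions or by none. It is worth noting that this common value equals $\mathbf{y}_0^t M\mathbf{y}_0=\mathbf{1}^t\mathbf{y}_0$, since $M$ is symmetric with unit diagonal over $\mathbb{Z}_2$; this gives an alternative combinatorial reading of the criterion, though it is not needed for the proof. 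Everything else is routine rank--nullity bookkeeping once the block form is in place.
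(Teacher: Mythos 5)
Your argument is correct and complete. Note, however, that the paper itself gives no proof of this lemma: it is imported verbatim from Ballard, Budge and Stephenson [\cite{Ballard19}, Proposition 2.4], so there is no in-paper argument to compare against. What you have written is essentially the standard linear-algebraic proof of that proposition: write $N(G)$ in the block form $\left(\begin{smallmatrix} M & \mathbf{a}\\ \mathbf{a}^t & 1\end{smallmatrix}\right)$ with $M=N(G-v)$, analyze the projection of $Ker(N(G))$ onto the coordinate of $v$, and split on whether $\mathbf{a}\in Col(M)$, using that $Col(M)=Ker(M)^\perp$ for a symmetric matrix over $\mathbb{Z}_2$. All the individual steps check out: the identification of $\ker\pi$ with $\{\mathbf{y}\in Ker(M):\mathbf{a}^t\mathbf{y}=0\}$, the dimension drop by one when $\mathbf{a}\not\perp Ker(M)$, and the constancy of $\mathbf{a}^t\mathbf{y}$ on the affine solution set $\mathbf{y}_0+Ker(M)$ when $\mathbf{a}\perp Ker(M)$, which is the decisive point separating $nd(v)=-1$ from $nd(v)=0$. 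Your trichotomy also establishes, as a byproduct, the unproved assertion in the surrounding text that $nd(v)\in\{-1,0,1\}$. The closing observation that the common value equals $\mathbf{y}_0^tM\mathbf{y}_0=\mathbf{1}^t\mathbf{y}_0$ is a correct and pleasant extra, though, as you say, not needed.
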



Now we are ready to state our main result.
\begin{thm}
\label{main}
Let $G$ be a graph and  $S$ be an odd dominating set of $G$. Then $pr(S)= pr(\rho(G))$. Equivalently, $pr(V(G)\backslash S)= pr(\nu(G))$.
\end{thm}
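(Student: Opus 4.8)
The plan is to induct on the nullity $\nu(G)$. Since the intro records (Amin et al.) that all odd dominating sets of a given graph share the same cardinality parity, it suffices to verify $pr(|S|)=pr(\rho(G))$ for a single conveniently chosen odd dominating set $S$ in each case.

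For the base case $\nu(G)=0$, the matrix $N=N(G)$ is invertible, so the odd dominating set is unique and determined by $\mathbf{x}_S=N^{-1}\mathbf{1}$. Then $pr(|S|)=pr(\mathbf{x}_S)$ is the parity of the sum of the entries of $N^{-1}\mathbf{1}$, which equals $\mathbf{1}^t N^{-1}\mathbf{1}$, i.e. the sum of all entries of $N^{-1}$, namely $pr(N^{-1})$. Because $N$ is symmetric, invertible, and has every diagonal entry equal to $1$ (each vertex lies in its own closed neighborhood), Lemma \ref{prA} gives $pr(N^{-1})=pr(n)$. As $\rho(G)=n$ when $\nu(G)=0$, this is exactly $pr(\rho(G))$, settling the base case.

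For the inductive step, assume $\nu(G)>0$. Then $G$ has a nonempty even dominating set, so $G$ has a null vertex $v$. By Lemma \ref{j1} I may choose an odd dominating set $S$ of $G$ with $v\notin S$. The key observation is that such an $S$ is also an odd dominating set of $G-v$: for every $u\neq v$ the closed neighborhood $N_{G-v}[u]$ is obtained from $N_{G}[u]$ by deleting $v$ if present, so since $v\notin S$ we get $N_{G-v}[u]\cap S=N_{G}[u]\cap S$, and this intersection remains odd. The cardinality of $S$ is unchanged by viewing it inside $G-v$, so applying the induction hypothesis to $G-v$ yields $pr(|S|)=pr(\rho(G-v))$; this is legitimate because $\nu(G-v)<\nu(G)$ by Lemma \ref{j2}, where $v$ being a null vertex forces $nd(v)=-1$.

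It remains to compare ranks. Writing $\rho(G-v)=(n-1)-\nu(G-v)$ and substituting $\nu(G-v)=\nu(G)-1$ from Lemma \ref{j2}, I obtain $\rho(G-v)=n-\nu(G)=\rho(G)$, whence $pr(|S|)=pr(\rho(G))$. The main work is concentrated in the base case, where the parity of $|S|$ is identified with $pr(N^{-1})$ and pinned down by Lemma \ref{prA}; the inductive step is then a clean reduction, the only points requiring care being that deleting a null vertex drops the nullity by exactly one and that an odd dominating set avoiding $v$ descends to $G-v$. Finally, the equivalent formulation $pr(|V(G)\setminus S|)=pr(\nu(G))$ follows immediately from $|V(G)\setminus S|=n-|S|$ together with $\nu(G)=n-\rho(G)$.
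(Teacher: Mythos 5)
Your proposal is correct and follows essentially the same route as the paper's own proof: induction on $\nu(G)$, with the base case handled by Lemma \ref{prA} applied to the invertible matrix $N$, and the inductive step via Lemmas \ref{j1} and \ref{j2} by deleting a null vertex avoided by a suitable odd dominating set. The only cosmetic difference is that you invoke the parity-invariance of odd dominating sets at the outset rather than at the end.
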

\begin{proof}
We prove the claim by applying induction on the nullity of the graph. Let $n$ be the order of $G$. In the case where $\nu(G)=0$, there exists a unique odd dominating set $S$ such that $N \mathbf{x}_S=\mathbf{1}$. Note that $N$ satisfies the conditions of Lemma \ref{prA}. Hence, together with the rank nullity theorem, we have $$pr(S)=pr(\mathbf{x}_S)= pr( N^{-1} \mathbf{1} )= pr(N^{-1})=pr(N)= pr(n)= pr (\rho(G)).$$

Now assume that $\nu(G)>0$ and the claim holds true for all graphs with nullity less than  $\nu(G)$. Since $\nu(G)$ is nonzero, there exists a non-empty even dominating set. Hence, there exists a null vertex $v$ of $G$. By Lemma \ref{j1}, there is an odd dominating set $S$ of $G$ which does not contain $v$. Since $S$ does not contain $v$, it is also an odd dominating set of the graph $G-v$. Moreover, by Lemma \ref{j2}, $nd(v)=-1$. Hence, $\nu(G-v)=\nu(G)+nd(v)=\nu(G)-1<\nu(G)$. By the induction hypothesis $pr(S)=pr (\rho(G-v))$. On the other hand, using the rank nullity theorem we obtain $\rho(G-v) = n-1-\nu(G-v)= n-1-\nu(G)+1 = n-\nu(G)= \rho(G).$ We complete the proof by noting that all odd dominating sets in $G$  have the same parity.
\end{proof}
\section{Some corollaries}

\begin{cor}
Let $G$ be an always solvable graph of order $n$. Then the odd dominating set of $G$ has odd (even) cardinality if $n$ is odd (even).
\end{cor}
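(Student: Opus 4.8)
The plan is to obtain this statement as an essentially one-line consequence of Theorem \ref{main} together with the characterization of always solvable graphs recorded in the introduction. First I would recall that $G$ is always solvable if and only if $\nu(G)=0$; thus the hypothesis gives $\nu(G)=0$ immediately. In particular $N(G)$ is invertible over $\mathbb{Z}_2$, so there is in fact a unique odd dominating set, but I will not need uniqueness since the main theorem already fixes the parity of $|S|$ for every such set.

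Next I would invoke the rank nullity theorem in the form $\nu(G)+\rho(G)=n$ stated earlier in the paper. Substituting $\nu(G)=0$ yields $\rho(G)=n$, so the rank of $G$ coincides with its order. This is the only computation involved.

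Finally I would apply Theorem \ref{main}, which asserts $pr(|S|)=pr(\rho(G))$ for any odd dominating set $S$ of $G$. Combining this equality with $\rho(G)=n$ gives $pr(|S|)=pr(n)$, which is precisely the assertion that $|S|$ is odd when $n$ is odd and even when $n$ is even. There is no genuine obstacle here: the substance of the argument lives entirely in Theorem \ref{main}, and this corollary merely specializes that result to the case $\nu(G)=0$, in which the rank equals the order.
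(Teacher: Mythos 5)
Your proof is correct and follows exactly the route the paper intends (the corollary is stated without proof precisely because it is this immediate specialization): always solvable gives $\nu(G)=0$, rank–nullity gives $\rho(G)=n$, and Theorem \ref{main} gives $pr(|S|)=pr(\rho(G))=pr(n)$. No issues.
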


Note that if every vertex of a graph $G$ has even degree, then $V(G)$ itself is an odd dominating set. This, together with Theorem \ref{main}, gives the following.
\begin{cor}
If every vertex of a graph $G$ has even degree, then $\nu(G)$ is even.
\end{cor}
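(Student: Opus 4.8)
The plan is to extract from the even-degree hypothesis an explicit odd dominating set, namely $V(G)$ itself, and then read off the conclusion directly from Theorem \ref{main}. First I would verify the remark that precedes the statement: for any vertex $v$ we have $N[v]\cap V(G)=N[v]$, and $|N[v]|=\deg(v)+1$, since $N[v]$ consists of $v$ together with all of its neighbors. If every vertex of $G$ has even degree, then $|N[v]|$ is odd for every $v$, so $N[v]\cap V(G)$ is odd for all $v\in V(G)$; by definition this says precisely that $S:=V(G)$ is an odd dominating set.

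With this odd dominating set in hand, I would invoke Theorem \ref{main} in its equivalent formulation $pr(|V(G)\backslash S|)=pr(\nu(G))$. Taking $S=V(G)$ gives $V(G)\backslash S=\emptyset$, so $|V(G)\backslash S|=0$, which is even. Hence $pr(\nu(G))=pr(0)=0$, i.e., $\nu(G)$ is even, which is exactly the claim.

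Alternatively, and as a useful sanity check, one can argue via the rank form $pr(|S|)=pr(\rho(G))$. With $S=V(G)$ we have $|S|=n$, so $pr(n)=pr(\rho(G))$; that is, $n$ and $\rho(G)$ share the same parity. The rank--nullity identity $\nu(G)+\rho(G)=n$ then forces $\nu(G)=n-\rho(G)$ to be even. Both routes are immediate once Theorem \ref{main} is available.

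As for obstacles, there is essentially none here, since the statement is a direct corollary. The only point that uses the hypothesis is the verification that $V(G)$ is odd dominating, and the even-degree assumption enters in exactly one place: to ensure that $\deg(v)+1$ is odd at every vertex. Everything else is a one-line consequence of the main theorem together with rank--nullity, so whatever difficulty there was has already been absorbed into the proof of Theorem \ref{main}.
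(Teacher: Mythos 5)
Your proof is correct and matches the paper's argument exactly: the paper also observes that the even-degree hypothesis makes $V(G)$ an odd dominating set (since $|N[v]|=\deg(v)+1$ is odd) and then applies Theorem \ref{main}. Nothing further to add.
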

\begin{cor}
If the number of even degree vertices of a tree $T$ is at most one, then every odd dominating set of $T$ has odd cardinality.
\end{cor}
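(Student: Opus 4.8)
The plan is to invoke Theorem \ref{main} to replace the statement about $|S|$ by a statement about the rank: it suffices to show that $\rho(T)$ is odd. Since the number of odd-degree vertices of any graph is even, the hypothesis that $T$ has at most one even-degree vertex splits into two cases: (A) every vertex of $T$ has odd degree, which forces $n:=|V(T)|$ to be even; and (B) exactly one vertex $w$ has even degree, which forces $n$ to be odd. Using $\rho(T)=n-\nu(T)$, the goal becomes showing that $\nu(T)$ is odd in case (A) and even in case (B). I will in fact prove the sharper statements $\nu(T)=1$ in (A) and $\nu(T)=0$ in (B).

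The heart of the argument is case (B), which I would handle by induction on $n$ with base case $K_1$. First I would record that $N\mathbf 1=\mathbf{x}_{\{w\}}$, because the $i$th entry of $N\mathbf 1$ equals $|N[v_i]|=\deg(v_i)+1$ over $\mathbb{Z}_2$, which is $1$ exactly at $w$. Since $N$ is symmetric we have $Col(N)=Ker(N)^{\perp}$, so every $\mathbf{x}_R\in Ker(N)$ is orthogonal to $\mathbf{x}_{\{w\}}$; that is, $w$ lies in no even dominating set. Now delete $w$: each component $T_i$ of $T-w$ is a tree in which the unique former neighbor of $w$ has had its (odd) degree lowered to an even one while every other degree is unchanged, so each $T_i$ is again a tree with exactly one even-degree vertex. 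Because the coordinate $x_w$ is forced to be $0$, the kernel equations of $T$ restrict exactly to the kernel equations of the $T_i$, so any kernel vector of $N(T)$ restricts to a kernel vector of each $N(T_i)$; by the induction hypothesis these restrictions vanish, whence $\nu(T)=0$.

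Case (A) then follows from case (B). Here every closed neighborhood $N[u]$ has even cardinality $\deg(u)+1$, so $V(T)$ is a nonempty even dominating set and in particular any leaf $\ell$ is a null vertex of $T$. By Lemma \ref{j2} this gives $nd(\ell)=-1$, i.e. $\nu(T-\ell)=\nu(T)-1$. But $T-\ell$ has exactly one even-degree vertex, namely the neighbor of $\ell$, so it is a case-(B) tree and $\nu(T-\ell)=0$ by the previous paragraph; hence $\nu(T)=1$. In both cases $\rho(T)=n-\nu(T)$ is odd, and Theorem \ref{main} finishes the proof.

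The step I expect to be the crux is the identity $N\mathbf 1=\mathbf{x}_{\{w\}}$ together with its consequence $x_w=0$ for every kernel vector: this is exactly what confines the kernel to the branches hanging off $w$ and keeps each branch inside the inductive class. Without it one is tempted to strip an arbitrary leaf, but in case (B) this generically produces a second even-degree vertex and leaves the class, and the nullity of a general tree has no clean closed form (for instance $\nu(P_4)=0$ although $P_4$ has two even-degree vertices). Once $x_w=0$ is in hand, the degree bookkeeping showing that each $T_i$ is again a one-even-vertex tree is routine.
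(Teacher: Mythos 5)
Your argument is correct, and its skeleton coincides with the paper's: split on whether $T$ has zero or one even-degree vertex, establish $\nu(T)=1$ in the first case and $\nu(T)=0$ in the second, and combine Theorem \ref{main} with the handshake lemma to conclude $|S|$ is odd. The difference is where the two nullity facts come from. The paper simply cites [\cite{Amin98}, Theorems 3 and 4] for $\nu(T)=1$ and $\nu(T)=0$ respectively, so its proof is a short bookkeeping step on top of known results. You instead prove both facts from scratch: in case (B) the observation $N\mathbf{1}=\mathbf{x}_{\{w\}}$ forces $x_w=0$ for every kernel vector (via $\mathbf{x}_{\{w\}}^t\mathbf{x}_R=\mathbf{1}^tN\mathbf{x}_R=0$), which lets you delete $w$ and recurse into the components of $T-w$, each of which is again a tree with exactly one even-degree vertex; in case (A) you strip a leaf, which is a null vertex because $V(T)$ is a nonempty even dominating set, and apply Lemma \ref{j2} to land in case (B). I checked the details — the degree bookkeeping after deleting $w$ or $\ell$, the restriction of the kernel equations to the components, and the base case $K_1$ — and they all hold. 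What your route buys is a self-contained proof that also reproves the relevant special cases of the Amin--Clark--Slater theorems using only the machinery already in the paper (Lemma \ref{j2} and elementary linear algebra over $\mathbb{Z}_2$); what the paper's route buys is brevity.
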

\begin{proof}
Let $n$ be the order of $T$. By [\cite{Amin98}, Theorem 3] if every vertex of $T$ has odd degree, then $\nu(T)=1$. By the handshaking lemma, $n$ must be even, hence
$\rho(T)$ is odd. By [\cite{Amin98}, Theorem 4], if exactly one vertex of $T$ has even degree, then $\nu(T)=0$.
Since $n$ must be odd, $\rho(T)$ is also odd. Hence in either case, every odd dominating set has
odd cardinality by Theorem \ref{main}.
\end{proof}
\begin{cor}
Every odd dominating set of a graph $G$  has an odd (even) number of vertices of odd degree if and only if $\nu(G)$ is odd (even). In particular, the odd dominating set of an always solvable graph has an even number of odd degree vertices.
\end{cor}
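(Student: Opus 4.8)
The plan is to compute the parity of the number of odd-degree vertices contained in an odd dominating set $S$ directly, express it in terms of $pr(|V(G)\backslash S|)$, and then invoke Theorem \ref{main}. Throughout, write $\bar{S}=V(G)\backslash S$, let $N(u)$ denote the open neighborhood of $u$, and let $e(u,\bar{S})=|N(u)\cap\bar{S}|$ be the number of neighbors of $u$ lying outside $S$.

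First I would analyze the vertices of $S$. For $u\in S$ we have $N[u]\cap S=\{u\}\cup\bigl(N(u)\cap S\bigr)$, so $|N[u]\cap S|=1+|N(u)\cap S|$. Since $S$ is odd dominating this quantity is odd, which forces $|N(u)\cap S|$ to be even for every $u\in S$; that is, each vertex of $S$ has an even number of neighbors inside $S$. Splitting the degree as $d_G(u)=|N(u)\cap S|+e(u,\bar{S})$, it follows that $d_G(u)$ has the same parity as $e(u,\bar{S})$, so a vertex $u\in S$ has odd degree in $G$ precisely when $e(u,\bar{S})$ is odd.

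Next I would pass to counting modulo $2$. The number of odd-degree vertices of $S$ is congruent to $\sum_{u\in S}e(u,\bar{S})$ modulo $2$, and this sum counts the edges joining $S$ to $\bar{S}$; the same edge count equals $\sum_{w\in\bar{S}}|N(w)\cap S|$. For $w\in\bar{S}$ we have $w\notin S$, hence $N[w]\cap S=N(w)\cap S$, and since $S$ is odd dominating, $|N(w)\cap S|=|N[w]\cap S|$ is odd for every such $w$. Therefore the edge count is congruent to $|\bar{S}|=|V(G)\backslash S|$ modulo $2$, and I conclude that the number of odd-degree vertices of $S$ has the same parity as $|V(G)\backslash S|$.

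Finally I would apply Theorem \ref{main}, which gives $pr(|V(G)\backslash S|)=pr(\nu(G))$, to deduce that the number of odd-degree vertices of $S$ is odd (even) exactly when $\nu(G)$ is odd (even). The ``in particular'' statement follows immediately, since an always solvable graph satisfies $\nu(G)=0$. I do not expect a genuine obstacle: the argument is a clean combinatorial double count, and the only point demanding care is the symmetric treatment of the edges between $S$ and $\bar{S}$, which makes both the contribution of $S$ and the contribution of $\bar{S}$ collapse to the single parity $pr(|V(G)\backslash S|)$.
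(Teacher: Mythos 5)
Your argument is correct, and it reaches the conclusion by a more elementary route than the paper. The paper works entirely in the matrix formalism: it first shows $N\mathbf{1}=\mathbf{x}_{D^c}$ (where $D$ is the set of odd-degree vertices), deduces $N\mathbf{x}_{S^c}=\mathbf{x}_D$, and then computes $pr(|D\cap S|)=\mathbf{x}_D^t\mathbf{x}_S=(N\mathbf{x}_{S^c})^t\mathbf{x}_S=\mathbf{x}_{S^c}^tN\mathbf{x}_S=\mathbf{x}_{S^c}^t\mathbf{1}=pr(|S^c|)$, the key step being the symmetry of $N$. Your proof replaces that symmetry argument with an explicit double count of the edges between $S$ and $\bar{S}$: you show that a vertex of $S$ has odd degree exactly when it has an odd number of neighbours in $\bar{S}$ (because its neighbours inside $S$ come in even number by odd domination), and that each vertex of $\bar{S}$ contributes an odd number of such edges (again by odd domination), so both sides of the count collapse to $pr(|D\cap S|)$ and $pr(|\bar{S}|)$ respectively. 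These are really two languages for the same bilinear identity --- $\mathbf{x}_{S^c}^tN\mathbf{x}_S$ is precisely your edge count mod $2$ --- but your version is self-contained and makes the combinatorial mechanism visible, while the paper's version is shorter and consistent with the linear-algebraic machinery used throughout. Both then finish identically by quoting Theorem \ref{main} for $pr(|V(G)\backslash S|)=pr(\nu(G))$, and your handling of the ``in particular'' clause via $\nu(G)=0$ is exactly right.
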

\begin{proof}
Observe that for any subsets $A$, $B$ of $V(G)$, $pr(A\cap B)=\mathbf{x}_A^t \mathbf{x}_B$. In particular, $pr(A)=\mathbf{x}_A^t \mathbf{1}$. Let $A^c$ be the complement of $A$ in $V(G)$. Then we have $\mathbf{x}_{A^c}=\mathbf{x}_A+\mathbf{1}$. Now let $S$ be an odd dominating set of $G$ and $D$ be the set of vertices with odd degree. Observe that $N\mathbf{1}=\mathbf{x}_{D^c}$. Therefore $N \mathbf{x}_{S^c}= N (\mathbf{x}_S+\mathbf{1})=\mathbf{1}+\mathbf{x}_{D^c}=\mathbf{x}_D$. Then,
$pr(D\cap S)= \mathbf{x}_D^t \mathbf{x}_S = (N\mathbf{x}_{S^c})^t \mathbf{x}_S =\mathbf{x}_{S^c}^t N\mathbf{x}_S =\mathbf{x}_{S^c}^t \mathbf{1}=pr(S^c).$
On the other hand, $pr(S^c)=pr(\nu(G))$ by Theorem \ref{main}. Hence, the result follows.
\end{proof}

We define the \emph{join} $G_1\oplus ... \oplus G_m$ of $m$ pairwise disjoint graphs $G_1,...,G_m$ as follows. We take the vertex set as $V(G_1\oplus ... \oplus G_m)=\cup_{i=1}^m V(G_i)$ and the edge set as $E(G_1\oplus ... \oplus G_m)=\cup_{i=1}^m E(G_i)\cup \{(u,v)\;|\; u\in V(G_k) ,\; v\in V(G_l) \; k,l\in \{1,...,m\}\; \text{such that}\; k\neq l\}$. Then Amin et al. prove the following proposition in [\cite{Amin02}, Corollary 6].
\begin{prop}[\cite{Amin02}]
$\nu(G_1\oplus G_2)=\nu(G_1)+\nu(G_2)$ if either $G_1$ or $G_2$ has an odd dominating set of even cardinality, and $\nu(G_1\oplus G_2)=\nu(G_1)+\nu(G_2)+1$, otherwise.
\end{prop}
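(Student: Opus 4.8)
The plan is to compute the kernel of the closed neighborhood matrix of the join directly, exploiting its block structure. Writing $n_1,n_2$ for the orders of $G_1,G_2$, the definition of the join shows that a vertex of $G_1$ is adjacent to every vertex of $G_2$ and vice versa, so over $\mathbb{Z}_2$ the matrix takes the block form
\[
N(G_1\oplus G_2)=\begin{pmatrix} N(G_1) & J \\ J^t & N(G_2)\end{pmatrix},
\]
where $J$ is the $n_1\times n_2$ all-ones matrix. A vector $(\mathbf{x},\mathbf{y})^t$ lies in the kernel iff $N(G_1)\mathbf{x}+J\mathbf{y}=\mathbf{0}$ and $J^t\mathbf{x}+N(G_2)\mathbf{y}=\mathbf{0}$. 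Since $J\mathbf{y}=(\mathbf{1}^t\mathbf{y})\,\mathbf{1}$ and $J^t\mathbf{x}=(\mathbf{1}^t\mathbf{x})\,\mathbf{1}$, these equations see $\mathbf{y}$ and $\mathbf{x}$ only through the scalars $a:=\mathbf{1}^t\mathbf{y}$ and $b:=\mathbf{1}^t\mathbf{x}$ in $\mathbb{Z}_2$, and reduce to $N(G_1)\mathbf{x}=a\,\mathbf{1}$ and $N(G_2)\mathbf{y}=b\,\mathbf{1}$.

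The key ingredient I would isolate first is that every kernel vector of $N(G_i)$ has even weight, i.e. $\mathbf{1}^t\mathbf{z}=0$ for all $\mathbf{z}\in Ker(N(G_i))$. This follows from Sutner's theorem together with symmetry: choosing an odd dominating set $S$ of $G_i$ gives $N(G_i)\mathbf{x}_S=\mathbf{1}$, whence for $\mathbf{z}\in Ker(N(G_i))$ one computes $\mathbf{1}^t\mathbf{z}=(N(G_i)\mathbf{x}_S)^t\mathbf{z}=\mathbf{x}_S^t N(G_i)\mathbf{z}=0$, which is exactly the transpose trick used in the last corollary above. Equivalently $\mathbf{1}\in Col(N(G_i))=Ker(N(G_i))^{\perp}$.

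With this in hand I would split the kernel along the parity functional $\ell(\mathbf{x},\mathbf{y})=(b,a)=(\mathbf{1}^t\mathbf{x},\mathbf{1}^t\mathbf{y})\in\mathbb{Z}_2^2$, restricted to $Ker(N(G_1\oplus G_2))$, and apply rank-nullity to $\ell$. Its kernel consists of the kernel vectors with $a=b=0$; these satisfy $N(G_1)\mathbf{x}=N(G_2)\mathbf{y}=\mathbf{0}$, so they are exactly $Ker(N(G_1))\times Ker(N(G_2))$ --- the even-weight constraints being automatic by the previous paragraph --- of dimension $\nu(G_1)+\nu(G_2)$. For the image of $\ell$: the pair $(0,0)$ is always attained, while $(1,0)$ and $(0,1)$ are impossible, since $a=0$ forces $\mathbf{x}\in Ker(N(G_1))$ and hence $b=\mathbf{1}^t\mathbf{x}=0$. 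Finally $(1,1)$ is attained iff there exist odd dominating sets $S_1,S_2$ of $G_1,G_2$ with $\mathbf{1}^t\mathbf{x}_{S_1}=\mathbf{1}^t\mathbf{x}_{S_2}=1$, i.e. of odd cardinality; a direct check shows $(\mathbf{x}_{S_1},\mathbf{x}_{S_2})$ is then in the kernel, and conversely any preimage of $(1,1)$ produces such sets. Hence $\ell$ has image of dimension $1$ exactly when both $G_1$ and $G_2$ have odd dominating sets of odd cardinality, and dimension $0$ otherwise.

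Assembling, $\nu(G_1\oplus G_2)=\nu(G_1)+\nu(G_2)+\dim(\mathrm{im}\,\ell)$, and by Theorem \ref{main} (or already by the fact that all odd dominating sets of a graph share one parity) the case $\dim(\mathrm{im}\,\ell)=1$ is precisely the case where neither $G_1$ nor $G_2$ has an odd dominating set of even cardinality, giving the claimed dichotomy. I expect the only real subtlety to be the even-weight lemma of the second paragraph: once it is established, the coupling between $\mathbf{x}$ and $\mathbf{y}$ collapses and the functional $\ell$ does all the bookkeeping. The image computation for $(1,1)$ is where the hypothesis on the parity of odd dominating sets enters, so it should be phrased through the common parity guaranteed by the main theorem.
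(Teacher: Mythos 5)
Your proof is correct and complete, but note that the paper does not prove this proposition at all: it is quoted verbatim from Amin, Slater and Zhang [\cite{Amin02}, Corollary 6], so there is no internal argument to compare against. What you supply is a self-contained linear-algebraic proof from the block form
\[
N(G_1\oplus G_2)=\begin{pmatrix} N(G_1) & J \\ J^t & N(G_2)\end{pmatrix},
\]
and each step checks out: the coupling through $J$ really does collapse to the two scalars $a=\mathbf{1}^t\mathbf{y}$, $b=\mathbf{1}^t\mathbf{x}$; the even-weight lemma ($\mathbf{1}\in Col(N(G_i))=Ker(N(G_i))^{\perp}$, via Sutner's theorem and symmetry of $N$) correctly identifies $\ker\ell$ with $Ker(N(G_1))\times Ker(N(G_2))$ and rules out the image values $(1,0)$ and $(0,1)$; and the $(1,1)$ case is exactly the condition that both graphs have odd dominating sets of odd cardinality, giving the dichotomy $\dim(\mathrm{im}\,\ell)\in\{0,1\}$ and hence the $+0$ versus $+1$ in the formula. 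One small refinement: you do not need Theorem \ref{main} in the last step, since the fact that all odd dominating sets of $G_i$ share one parity already follows from your even-weight lemma (the symmetric difference of two odd dominating sets lies in $Ker(N(G_i))$, hence has even cardinality); keeping the argument at that level makes it independent of the paper's main theorem and avoids any appearance of circularity, given that the paper later combines this proposition with Theorem \ref{main} to derive Proposition \ref{pro}. As a bonus, your computation actually yields the sharper statement $\nu(G_1\oplus G_2)=\nu(G_1)+\nu(G_2)+\dim(\mathrm{im}\,\ell)$ with an explicit description of the extra kernel vector $(\mathbf{x}_{S_1},\mathbf{x}_{S_2})$ in the $+1$ case.
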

Together with Theorem \ref{main}, the above proposition implies the following.
\begin{equation}
\label{pro1}
\nu(G_1\oplus G_2)=\nu(G_1)+\nu(G_2)+pr(\rho(G_1)\rho(G_2)).
\end{equation}
Equivalently,
\begin{equation}
\label{pro2}
\rho(G_1\oplus G_2)=\rho(G_1)+\rho(G_2)-pr(\rho(G_1)\rho(G_2)).
\end{equation}
Equivalence of \eqref{pro1} and \eqref{pro2} follows from the rank nullity theorem.

Expressing the nullity/rank of $G_1\oplus G_2$ as a single formula involving nullities/ranks of $G_1$ and $G_2$ as above enables us to extend this result and to write a formula for the nullity/rank of the join of arbitrary number of graphs as follows.

\begin{prop}
Let $\{G_1,...,G_m\}$ be a collection of pairwise disjoint graphs. Let $j$ be the number of graphs in $\{G_1,...,G_m\}$ with odd rank. Then
\begin{align}
\label{rho1}
\nu(G_1\oplus ... \oplus G_m)= \left\{ \begin{array}{cc}
                \sum_{i=1}^m \nu(G_i)&  \;\;\;\;\text{if}\;\;\;\; j=0\\
                \;\; \;\; \;\;\;\;\;\;\;\;\sum_{i=1}^m \nu(G_i)+ j-1 &  \;\;\text{otherwise}  \\
                \end{array} \right\}.
\end{align}
Equivalently,
\begin{align}
\label{rho2}
\rho(G_1\oplus ... \oplus G_m)= \left\{ \begin{array}{cc}
                \sum_{i=1}^m \rho(G_i)&  \;\;\;\;\text{if}\;\;\;\; j=0\\
                \;\; \;\; \;\;\;\;\;\;\;\;\sum_{i=1}^m \rho(G_i)- j+1 &  \;\;\text{otherwise}  \\
                \end{array} \right\}.
\end{align}
\end{prop}
\begin{proof}
We prove \eqref{rho2}, then \eqref{rho1} follows from the rank nullity theorem. If $j=0$, then all graphs have even rank and the result follows applying \eqref{pro2} successively. Now let $j\neq 0$. Without loss of generality, we can assume that the first $j$ graphs have odd rank. Then, by \eqref{pro2}, $\rho(G_1 \oplus G_2) = \rho(G_1) + \rho(G_2)–1$, which is odd. Hence,
$\rho(G_1 \oplus G_2 \oplus G_3) = \rho(G_1) + \rho(G_2) -1 + \rho(G_3) – 1 = \rho(G_1) + \rho(G_2) + \rho(G_3) – 2$, which is
odd, and so on, yielding $\rho (G_1 \oplus G_2 \oplus \cdots \oplus G_j) = \rho(G_1) + \rho(G_2) + \cdots + \rho(G_j) – (j-1)$, which is
odd. Since the rank of the joins of the $m-j$ even ones is the sum of the ranks (which is even), the join of all $m$ of them is the sum of the
ranks minus $(j-1)$.

\end{proof}


\textbf{Declaration of Competing Interests} The author declares that he has no
known competing financial interests or personal relationships that could have appeared to influence the work reported in this paper. \medskip

\textbf{Acknowledgements} The author would like to thank the referees for their
valuable suggestions which improved the clarity and quality of the paper.

\bibliographystyle{plain}

\begin{thebibliography}{10}

\bibitem{Amin92}
Amin, A.~T., Slater, P.~J.,
\newblock Neighborhood domination with parity restrictions in graphs.
\newblock In {\em Proceedings of the {T}wenty-third {S}outheastern
  {I}nternational {C}onference on {C}ombinatorics, {G}raph {T}heory, and
  {C}omputing ({B}oca {R}aton, {FL}, 1992)}, 91 (1992), 19--30.

\bibitem{Amin96}
Amin, A.~T., Slater, P.~J.,
\newblock All parity realizable trees,
\newblock {\em J. Combin. Math. Combin. Comput.}, 20 (1996), 53--63.

\bibitem{Amin98}
Amin, A.~T., Clark, L.~H., Slater, P.~J.,
\newblock Parity dimension for graphs,
\newblock {\em Discrete Math.}, 187(1-3) (1998), 1--17. https://doi.org/10.1016/S0012-365X(97)00242-2

\bibitem{Amin02}
Amin, A.~T., Slater, P.~J., Zhang, G.~H.,
\newblock Parity dimension for graphs---a linear algebraic approach,
\newblock {\em Linear Multilinear Algebra}, 50(4) (2002), 327--342. https://doi.org/10.1080/0308108021000049293

\bibitem{Ballard19}
Ballard, L.~E., Budge, E.~L., Stephenson, D.~R.,
\newblock Lights out for graphs related to one another by constructions,
\newblock {\em Involve}, 12(2) (2019), 181--201. https://doi.org/10.2140/involve.2019.12.181

\bibitem{Caro96}
Caro, Y.,
\newblock Simple proofs to three parity theorems,
\newblock {\em Ars Combin.}, 42 (1996), 175--180.

\bibitem{Cowen99}
Cowen, R., Hechler, S.~H., Kennedy, J.~W., Ryba, A.,
\newblock Inversion and neighborhood inversion in graphs,
\newblock {\em Graph Theory Notes N. Y.}, 37 (1999), 37--41.


\bibitem{Erikson04}
 Eriksson, H., Eriksson, K., Sj\"{o}strand, J.,
\newblock Note on the lamp lighting problem,
\newblock \emph{Special issue in honor of Dominique Foata's 65th birthday
  (Philadelphia, PA, 2000)}, 27 (2001), 357--366. https://doi.org/10.1006/aama.2001.0739

\bibitem{Sutner89}
Sutner, K.,
\newblock Linear cellular automata and the {G}arden-of-{E}den,
\newblock {\em Math. Intelligencer}, 11(2) (1989), 49--53. https://doi.org/10.1007/BF03023823

\bibitem{Sutner90}
Sutner, K.,
\newblock The {$\sigma$}-game and cellular automata,
\newblock {\em Amer. Math. Monthly}, 97(1) (1990), 24--34. https://doi.org/10.1080/00029890.1990.11995540

\end{thebibliography}

\label{son}

\end{document}